\def\udcs{519.143} 
\def\mscs{05B15} 
\newtheorem{lemma}{Lemma}
\newtheorem{theorem}{Theorem}
\newtheorem{corollary}{Corollary}
\def\logo{{\bf\huge S\raisebox{0.2ex}{\hspace{0.55ex}\raisebox{0.05ex}e\hspace{-1.65ex}$\bigcirc$}MR}}
\def\semrtop
     \hfill\raisebox{1ex}{ISSN 1813-3304}\par
\LARGE\tt{http://semr.math.nsc.ru}}\\[0.5mm]
\begin{document}
\renewcommand{\refname}{References}
\renewcommand{\proofname}{Proof.}
\renewcommand\figurename{Figure}
\thispagestyle{empty}

\title[On the number of maximum independent sets in Doob graphs]{On the number of maximum independent sets\\ in Doob graphs}
\author{{D.\,S.\,Krotov}}%
\address{Denis Stanislavovich Krotov
\newline\hphantom{iii} Sobolev Institute of Mathematics,
\newline\hphantom{iii} pr. Akademika Koptyuga, 4,
\newline\hphantom{iii} 630090, Novosibirsk, Russia}%
\email{krotov@math.nsc.ru}%

\thanks{\sc Krotov, D. S.,
On the number of maximum independent sets in Doob graphs}
\thanks{\copyright \ 2015 Krotov D.S}
\thanks{The results were presented at the International Conference and PhD Summer School
``Groups and Graphs, Algorithms and Automata'' (August 09-15, 2015, Yekaterinburg, Russia).}
\thanks{The work was funded by the Russian Science Foundation (grant No 14-11-00555).}%
\thanks{\it Received  August, 4, 2015, published  September, 11,  2015.}%

\semrtop \vspace{1cm}
\maketitle {\small
\begin{quote}
\noindent{\sc Abstract. } 
  The Doob graph $D(m,n)$ is a distance-regular graph with the same parameters as the Hamming graph $H(2m+n,4)$.
  The maximum independent sets in the Doob graphs are analogs of the distance-$2$ MDS codes in the Hamming graphs.
  We prove that the logarithm of the number of the maximum independent sets in $D(m,n)$ grows as $2^{2m+n-1}(1+o(1))$.
  The main tool for the upper estimation is constructing an injective map from the class of maximum independent sets in $D(m,n)$
  to the class of distance-$2$ MDS codes in $H(2m+n,4)$.
\medskip

\noindent{\bf Keywords:} Doob graph, independent set, MDS code, latin hypercube.
 \end{quote}
}

\section{Introduction}

The Cartesian product $D(m,n)\stackrel{\scriptscriptstyle \mathrm{def}}={\mathrm{Sh}}^m\times K_4^n$ 
of $m$ copies of the Shrikhande graph ${\mathrm{Sh}}$ 
(see Figure~\ref{fig:Sh0123}) 
\begin{figure}[hbt]
\begin{center}
\begin{tikzpicture}[
scale=0.8,
nn/.style={circle,fill=white,draw=black,thick, 
           inner sep=0.7pt}]
           \begin{scope} 
\clip [] (-0.45,-0.45) rectangle (3.45,3.45);
\draw[ystep=1,xstep=1,very thick] (-4.9,-2.1) grid (5.4,3.9);
\draw[xslant=1,ystep=9,xstep=1,very thick] (-3.4,-2.1) grid (6.4,3.9);
\end{scope}
\draw [dashed,thin] (-0.5,-0.5) rectangle (3.5,3.5);
\draw [dashed,thin,<->,rounded corners=3.5mm] (-0.5,2.7) -- (-1.2,2.7) -- (-1.2,3.8) -- (4.2,3.8) --(4.2,2.7) -- (3.5,2.7);
\draw [dashed,thin,<->,rounded corners=3.5mm] (-0.5,0.3) -- (-1.2,0.3) -- (-1.2,-0.8) -- (4.2,-0.8) --(4.2,0.3) -- (3.5,0.3);
\draw [dashed,thin,<->,rounded corners=3.5mm] (0.3,-0.5) -- (0.3,-1.2) -- (-0.8,-1.2) -- (-0.8,4.2) --(0.3,4.2) -- (0.3,3.5);
\draw [dashed,thin,<->,rounded corners=3.5mm] (2.7,-0.5) -- (2.7,-1.2) -- (3.8,-1.2) -- (3.8,4.2) --(2.7,4.2) -- (2.7,3.5);
\draw 
  (0,0) node [nn] {$ 00$} +(1,0) node [nn] {$ 10$} +(2,0) node [nn] {$ 20$} +(3,0) node [nn] {$ 30$}
++(0,1) node [nn] {$ 01$} +(1,0) node [nn] {$ 11$} +(2,0) node [nn] {$ 21$} +(3,0) node [nn] {$ 31$}
++(0,1) node [nn] {$ 02$} +(1,0) node [nn] {$ 12$} +(2,0) node [nn] {$ 22$} +(3,0) node [nn] {$ 32$}
++(0,1) node [nn] {$ 03$} +(1,0) node [nn] {$ 13$} +(2,0) node [nn] {$ 23$} +(3,0) node [nn] {$ 33$};
\end{tikzpicture}
\end{center}
\caption{The Shrikhande graph drown on a torus; the vertices are identified with the elements of $Z_4^2$}
\label{fig:Sh0123}
\end{figure}
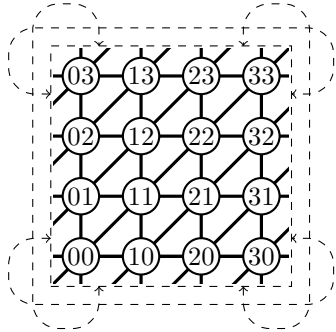
and $n$ copies of the complete graph $K_q$ of order $q=4$
is called a \emph{Doob graph} if $m>0$, while $D(0,n)$ is the \emph{Hamming graph} $H(n,4)$ 
(in general $H(n,q)\stackrel{\scriptscriptstyle \mathrm{def}}=K_q^n$).
The Doob graph $D(m,n)$ is a distance-regular graph with the same parameters as $H(2m+n,4)$, see e.g. \cite[\S9.2.B]{Brouwer}.
It is easy to see that the independence number of this graph is $4^{2m+n-1}$.
The maximum independent sets in the Hamming graphs are known as the distance-$2$ MDS codes 
(below, simply \emph{MDS codes}), or
the latin hypercubes 
(in the last case, 
one of the coordinates is usually considered as a function of the others).
It is naturally to call the maximum independent sets in Doob graphs by the same notion, 
the \emph{MDS codes}.
Indeed, the maximum independent sets in $D(m,n)$ and the MDS codes in $H(2m+n,4)$
have the same parameters being considered as error-correcting codes (see \cite{MWS} for the background on error-correcting codes)  
and as completely regular codes 
(see e.g. \cite[\S11.3]{Brouwer}).
(The concept of the latin hypercubes can also be generalized to $D(m,n)$; 
however, to do this,
we need at least one $K_4$ coordinate to treat as dependent, i.e., $n>0$.)
There are $4$ trivial MDS codes in $D(0,1)$; 
$24$ equivalent MDS codes in $D(0,2)$
($16$ of them can be found in Figure~\ref{fig:krotov1}); 
$16$ MDS codes in $D(1,0)$ 
(see Figure~\ref{fig:krotov1}), 
which form two equivalence classes (with $4$ and $12$ representatives, respectively).

The main result of the current correspondence is the following.

\begin{theorem}\label{th:}
The number of the maximum independent sets (distance-$2$ MDS codes) in the Doob graph $D(m,n)$
grows as $2^{2^{2m+n-1}(1+o(1))}$ as $(2m+n) \to \infty$.
\end{theorem}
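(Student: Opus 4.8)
The goal is to establish the two matching bounds
$2^{2m+n-1}(1-o(1))\le \log_2 N_{m,n}\le 2^{2m+n-1}(1+o(1))$,
where $N_{m,n}$ denotes the number of MDS codes (maximum independent sets) of $D(m,n)$; put $N=2m+n$ and write $\mathrm{MDS}(G)$ for the set of maximum independent sets of a graph $G$. Both directions rely on the same elementary observation about a single factor. If $A$ is a graph on $16$ vertices with $\alpha(A)=4$ (in particular $A=\mathrm{Sh}$ or $A=K_4\times K_4$) and $G$ is a Cartesian product of copies of $\mathrm{Sh}$ and $K_4$, then a counting argument on the fibers of an independent set $C\subseteq V(A\times G)$, over $V(A)$ and over $V(G)$, shows that $C$ is an MDS code of $A\times G$ if and only if $C=\{(a,g):g\in\sigma(a)\}$ for a map $\sigma\colon V(A)\to\mathrm{MDS}(G)$ such that for every $g\in V(G)$ the set $\{a\in V(A):g\in\sigma(a)\}$ is a \emph{maximum} independent set of $A$. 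Hence an MDS code of $D(m,n)$, split along one Shrikhande factor, is a $\mathbb{Z}_4^2$-indexed family of MDS codes of $D(m-1,n)$ whose ``fiber patterns'' are among the $16$ maximum independent sets of $\mathrm{Sh}$ (which fall into two orbits, of sizes $4$ and $12$), while an MDS code of $H(N,4)$, split along two $K_4$ factors, is the same kind of family but with fiber patterns ranging over the $24$ transversals of the $4\times 4$ grid.

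For the \emph{upper bound} I would invoke the known asymptotics $\#\mathrm{MDS}(H(N,4))=2^{2^{N-1}(1+o(1))}$ (the count of Latin hypercubes, equivalently $(N-1)$-ary quasigroups of order $4$) and reduce $D(m,n)$ to this case via the injection promised in the abstract, built one Shrikhande factor at a time: for every Cartesian product $G$ of $\mathrm{Sh}$'s and $K_4$'s I would construct an injection $\mathrm{MDS}(\mathrm{Sh}\times G)\hookrightarrow\mathrm{MDS}(K_4\times K_4\times G)$ and then iterate $D(m,n)\to D(m-1,n+2)\to\dots\to D(0,N)=H(N,4)$. By the observation above, such an injection amounts to re-encoding, injectively and reversibly on the image, a $\mathbb{Z}_4^2$-indexed family of MDS codes of $G$ whose fiber patterns are maximum independent sets of $\mathrm{Sh}$ into one whose fiber patterns are grid transversals.

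I expect this re-encoding to be the \emph{main obstacle}. It cannot be performed by a fixed relabelling of $\mathbb{Z}_4^2$ alone, since $\mathrm{Sh}$ and $K_4\times K_4$ are non-isomorphic and their families of maximum independent sets differ in size ($16$ versus $24$) and in structure: for instance the four cosets of the subgroup $\{(0,0),(2,0),(0,2),(2,2)\}$ are maximum independent sets of $\mathrm{Sh}$ but are not grid transversals, and no bijection of the vertex sets can straighten this out everywhere at once. So the encoding must extract auxiliary combinatorial data from the attached MDS codes $\sigma(a)$ of $G$ themselves and use it to resolve the mismatch while keeping the map invertible on its image; checking that the output is a genuine distance-$2$ MDS code of $H(N,4)$ and that the bookkeeping stays consistent across all $m$ Shrikhande factors is where most of the effort goes.

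For the \emph{lower bound} I would produce an explicit semilinear family. Realizing $D(m,n)$ as the Cayley graph of $\mathbb{Z}_4^N$ whose connection set consists, in each of the $m$ Shrikhande blocks, of the Shrikhande set $\{\pm e,\pm e',\pm(e+e')\}$ on the two coordinates of the block, together with the sets $\{1,2,3\}\cdot e_j$ on the $n$ $K_4$-coordinates, an MDS code with $n\ge 1$ (the case $n=0$ reducing to this) can be written as the graph $\{(x,f(x)):x\in\mathbb{Z}_4^{N-1}\}$ of a map $f\colon\mathbb{Z}_4^{N-1}\to\mathbb{Z}_4$ that is a quasigroup in each $K_4$-coordinate and, in each Shrikhande block with the remaining coordinates fixed, restricts to a proper $4$-colouring of $\mathrm{Sh}$. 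Starting from a linear $f_0$ and perturbing by $2\,G(\cdot)$ yields a family indexed by a $\{0,1\}$-valued function $G$; the delicate point is that letting $G$ depend only on the ``coarse'' data $x_j\bmod 2$ (for $K_4$-coordinates) and on the $\mathbb{Z}_4^2/\{(0,0),(2,0),(0,2),(2,2)\}$-residue of each Shrikhande block gives merely $2^{2^{m+n-1}(1-o(1))}$ codes --- short of the target by a factor $2^m$ in the exponent --- so one must instead exploit the $12$ non-parallel maximum independent sets of $\mathrm{Sh}$ to recover the missing bit of freedom per Shrikhande block and reach $2^{2^{N-1}(1-o(1))}$ codes. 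Finally, distinct parameters give distinct codes (a code determines $f$, and $f$ determines $G$) up to a bounded, hence $o(2^{N-1})$-in-the-exponent, loss, completing the lower bound.
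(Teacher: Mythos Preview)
Your overall architecture (inject $\mathrm{MDS}_{m,n}\hookrightarrow\mathrm{MDS}_{0,2m+n}$ one Shrikhande factor at a time, then quote the Hamming asymptotics; build a semilinear family for the lower bound) matches the paper, but both halves of your proposal contain a concrete misstep.

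\medskip
\textbf{Upper bound.} You correctly observe that a vertex bijection $\mathbb{Z}_4^2\to\mathbb{Z}_4^2$ cannot carry $\mathrm{MDS}(\mathrm{Sh})$ into $\mathrm{MDS}(K_4^2)$, but your conclusion---that the re-encoding must therefore read ``auxiliary combinatorial data from the attached MDS codes $\sigma(a)$ of $G$''---is wrong, and this is exactly where the paper's idea lies. What is needed is not a map on vertices but a fixed injection $\xi$ on the sixteen \emph{codes} themselves, $\xi:\mathrm{MDS}(\mathrm{Sh})\hookrightarrow\mathrm{MDS}(K_4^2)$, with the single property that $M'\cap M''=\varnothing\iff \xi M'\cap \xi M''=\varnothing$. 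Such a $\xi$ exists (the paper exhibits one explicitly), and once you have it the map $\kappa$ that replaces each Shrikhande slice $M_{x,y}$ by $\xi M_{x,y}$ sends MDS codes to MDS codes by a two-line argument: the fibres over neighbouring $(x,y)$ were disjoint in $\mathrm{Sh}$, hence remain disjoint in $K_4^2$, so $\kappa M$ is independent of the right size. No bookkeeping across the $m$ factors, no dependence on $G$, and injectivity of $\kappa$ is immediate from injectivity of $\xi$.

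\medskip
\textbf{Lower bound.} Your bit count is off. The quotient $\mathbb{Z}_4^2/\{(0,0),(2,0),(0,2),(2,2)\}$ has \emph{four} elements, so each Shrikhande block contributes \emph{two} bits to the domain of $G$, not one; together with one bit per $K_4$-coordinate this already gives $2m+(n-1)=2^{2m+n-1}$ free bits, hitting the target exactly. There is no ``factor $2^m$ in the exponent'' to recover, and the excursion through the $12$ non-parallel independent sets is unnecessary. This is precisely what the paper does: writing each $\mathrm{Sh}$-vertex as $x'x''$ with $x',x''\in\{0,1,2,3\}$ and each $K_4$-vertex as $x'x''$ with $x',x''\in\{0,1\}$, it takes
\[
M_\lambda=\Big\{(x'_1x''_1,\dots,x'_{m+n}x''_{m+n}):\ \textstyle\sum_i x'_i\equiv 0,\ \sum_i x''_i\equiv \lambda(x'_1,\dots,x'_{m+n})\pmod 2\Big\},
\]
with $\lambda:\{0,1,2,3\}^m\times\{0,1\}^n\to\{0,1\}$. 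The domain of $\lambda$ has size $4^m2^n=2^{2m+n}$, the parity constraint on $\sum x'_i$ makes exactly $2^{2m+n-1}$ of the arguments relevant, and distinct $\lambda$ on that half give distinct $M_\lambda$, so one gets $2^{2^{2m+n-1}}$ codes on the nose---with no need for the hypothesis $n\ge 1$ and no $o(1)$ loss. Your ``$n=0$ reduces to $n\ge 1$'' is also unjustified; the direct construction above covers it.
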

The statement of the theorem is straightforward from Corollaries~\ref{c:ub} (an upper bound) and~\ref{c:lb} (a lower bound)
proven in the next two sections.

\section{An upper bound}

In this section, we describe a rather simple recursive way to map
injectively the set $\mathrm{MDS}_{m,n}$ of MDS codes in $D(m,n)$ into $\mathrm{MDS}_{0,2m+n}$. 
At first, we define the map $\xi$ from $\mathrm{MDS}_{1,0}$ into
$\mathrm{MDS}_{0,2}$, see Figure~\ref{fig:krotov1}. 

\def\shpart#1 #2 #3 #4!{node [#1] {} +(1,0) node [#2] {} +(2,0) node [#3] {} +(3,0) node [#4] {}}
\def\SS#1#2#3#4{\begin{tikzpicture}[
scale=0.3,
nz/.style={circle,fill=white,draw=black, 
           inner sep=1.4pt},
xz/.style={circle,fill=black!50!white,draw=black, 
           inner sep=1.4pt},
zz/.style={circle,fill=black,draw=black, 
           inner sep=1.4pt}]
\begin{scope} 
\clip [] (-0.4,-0.4) rectangle (3.4,3.4);
\draw[ystep=1,xstep=1] (-4.9,-2.1) grid (5.4,3.9);
\draw[xslant=1,ystep=9,xstep=1] (-3.4,-2.1) grid (6.4,3.9);
\end{scope}
\draw 
 (0,0) \shpart #4!
++(0,1) \shpart #3!
++(0,1) \shpart #2!
++(0,1) \shpart #1!;
\end{tikzpicture}}
\def\KK#1#2#3#4{\begin{tikzpicture}[
scale=0.3,
nz/.style={circle,fill=white,draw=black, 
           inner sep=1.4pt},
xz/.style={circle,fill=black!50!white,draw=black, 
           inner sep=1.4pt},
zz/.style={circle,fill=black,draw=black, 
           inner sep=1.4pt}]
\begin{scope} 
\clip [] (-0.05,-0.05) rectangle (3.05,3.05);
\draw[ystep=1,xstep=1] (-4.9,-2.1) grid (5.4,3.9);
\end{scope}
\draw[] (0,3) to [out=-110,in=110]  (0,0); \draw[] (0,2) to [out=-60,in=60]  (0,0); \draw[] (0,3) to [out=-60,in=60]  (0,1);
\draw[] (1,3) to [out=-110,in=110]  (1,0); \draw[] (1,2) to [out=-60,in=60]  (1,0); \draw[] (1,3) to [out=-60,in=60]  (1,1);
\draw[] (2,3) to [out=-110,in=110]  (2,0); \draw[] (2,2) to [out=-60,in=60]  (2,0); \draw[] (2,3) to [out=-60,in=60]  (2,1);
\draw[] (3,3) to [out=-110,in=110]  (3,0); \draw[] (3,2) to [out=-60,in=60]  (3,0); \draw[] (3,3) to [out=-60,in=60]  (3,1);
\draw[] (0,0) to [out=-20,in=-160]  (3,0); \draw[] (0,0) to [out=30,in=150]  (2,0); \draw[] (1,0) to [out=30,in=150]  (3,0);
\draw[] (0,1) to [out=-20,in=-160]  (3,1); \draw[] (0,1) to [out=30,in=150]  (2,1); \draw[] (1,1) to [out=30,in=150]  (3,1);
\draw[] (0,2) to [out=-20,in=-160]  (3,2); \draw[] (0,2) to [out=30,in=150]  (2,2); \draw[] (1,2) to [out=30,in=150]  (3,2);
\draw[] (0,3) to [out=-20,in=-160]  (3,3); \draw[] (0,3) to [out=30,in=150]  (2,3); \draw[] (1,3) to [out=30,in=150]  (3,3);
\draw 
 (0,0) \shpart #4!
++(0,1) \shpart #3!
++(0,1) \shpart #2!
++(0,1) \shpart #1!;
\end{tikzpicture}}
\begin{figure}[ht]
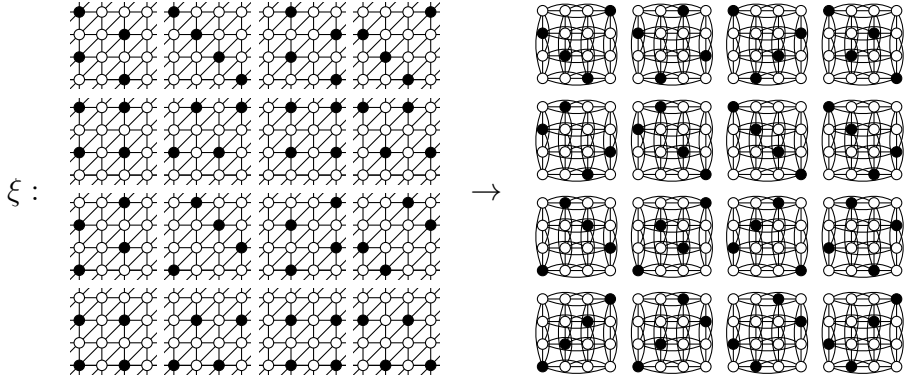

$$
\mbox{\Large$\xi:$}\ \ 
\begin{array}{c@{\ }c@{\ }c@{\ }c}
\SS{zz nz nz nz}{nz nz zz nz}{zz nz nz nz}{nz nz zz nz}&
\SS{zz nz nz nz}{nz zz nz nz}{nz nz zz nz}{nz nz nz zz}&
\SS{nz zz nz nz}{nz nz nz zz}{nz zz nz nz}{nz nz nz zz}&
\SS{nz nz nz zz}{zz nz nz nz}{nz zz nz nz}{nz nz zz nz}
\\
\SS{zz nz zz nz}{nz nz nz nz}{zz nz zz nz}{nz nz nz nz}&
\SS{nz zz nz zz}{nz nz nz nz}{zz nz zz nz}{nz nz nz nz}&
\SS{nz zz nz zz}{nz nz nz nz}{nz zz nz zz}{nz nz nz nz}&
\SS{zz nz zz nz}{nz nz nz nz}{nz zz nz zz}{nz nz nz nz}
\\
\SS{nz nz zz nz}{zz nz nz nz}{nz nz zz nz}{zz nz nz nz}&
\SS{nz zz nz nz}{nz nz zz nz}{nz nz nz zz}{zz nz nz nz}&
\SS{nz nz nz zz}{nz zz nz nz}{nz nz nz zz}{nz zz nz nz}&
\SS{nz nz zz nz}{nz nz nz zz}{zz nz nz nz}{nz zz nz nz}
\\
\SS{nz nz nz nz}{zz nz zz nz}{nz nz nz nz}{zz nz zz nz}&
\SS{nz nz nz nz}{nz zz nz zz}{nz nz nz nz}{zz nz zz nz}&
\SS{nz nz nz nz}{nz zz nz zz}{nz nz nz nz}{nz zz nz zz}&
\SS{nz nz nz nz}{zz nz zz nz}{nz nz nz nz}{nz zz nz zz}
\end{array}
\ \  \mbox{\Large$\to$}\ \ %
\begin{array}{c@{\ }c@{\ }c@{\ }c}
\KK{nz nz nz zz}{zz nz nz nz}{nz zz nz nz}{nz nz zz nz}&
\KK{nz nz zz nz}{zz nz nz nz}{nz nz nz zz}{nz zz nz nz}&
\KK{zz nz nz nz}{nz nz nz zz}{nz nz zz nz}{nz zz nz nz}&
\KK{zz nz nz nz}{nz nz zz nz}{nz zz nz nz}{nz nz nz zz}
\\
\KK{nz zz nz nz}{zz nz nz nz}{nz nz nz zz}{nz nz zz nz}&
\KK{nz zz nz nz}{zz nz nz nz}{nz nz zz nz}{nz nz nz zz}&
\KK{zz nz nz nz}{nz zz nz nz}{nz nz zz nz}{nz nz nz zz}&
\KK{zz nz nz nz}{nz zz nz nz}{nz nz nz zz}{nz nz zz nz}
\\
\KK{nz zz nz nz}{nz nz zz nz}{nz nz nz zz}{zz nz nz nz}&
\KK{nz nz nz zz}{nz zz nz nz}{nz nz zz nz}{zz nz nz nz}&
\KK{nz nz zz nz}{nz zz nz nz}{zz nz nz nz}{nz nz nz zz}&
\KK{nz zz nz nz}{nz nz nz zz}{zz nz nz nz}{nz nz zz nz}
\\
\KK{nz nz nz zz}{nz nz zz nz}{nz zz nz nz}{zz nz nz nz}&
\KK{nz nz zz nz}{nz nz nz zz}{nz zz nz nz}{zz nz nz nz}&
\KK{nz nz zz nz}{nz nz nz zz}{zz nz nz nz}{nz zz nz nz}&
\KK{nz nz nz zz}{nz nz zz nz}{zz nz nz nz}{nz zz nz nz}
\end{array}
$$
\caption{The $16$ MDS codes in ${\mathrm{Sh}}$ and the corresponding MDS~codes~in~$K_4^2$}
\label{fig:krotov1}
\end{figure}

For arbitrary $m$, $n\ge 0$, the action of $\kappa: \mathrm{MDS}_{m+1,n}\to \mathrm{MDS}_{m,n+2}$ is defined as follows:
$$
\kappa M \stackrel{\scriptscriptstyle \mathrm{def}}= \big\{ (x_1,...,x_m,z_1,z_2,y_1,...,y_n)\in D(m,n+2) \,\big|\, (z_1,z_2)\in \xi M_{x_1,...,x_m,y_1,...,y_n}\big\},
$$ 
where
$$
M_{x_1,...,x_m,y_1,...,y_n} \stackrel{\scriptscriptstyle \mathrm{def}}= \{ v \in {\mathrm{Sh}} \mid  (x_1,...,x_m,v,y_1,...,y_n)\in M \}  
$$

\begin{lemma}\label{l:inj}
 For every MDS code in $D(m+1,n)$, the set $\kappa M$ is an MDS code in $D(m,n+2)$.
\end{lemma}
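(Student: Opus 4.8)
The plan is to show that $\kappa M$ satisfies the two defining properties of an MDS code (maximum independent set) in $D(m,n+2)$: it is independent (no two codewords are adjacent), and it has the right cardinality $4^{2(m)+(n+2)-1} = 4^{2m+n+1}$. I would first observe that the fibers $M_{x_1,\dots,x_m,y_1,\dots,y_n}$ of an MDS code $M$ in $D(m+1,n)$ are themselves MDS codes in the single Shrikhande factor $\mathrm{Sh}$, i.e.\ elements of $\mathrm{MDS}_{1,0}$; this is the standard "cylinder" property of distance-$2$ MDS codes — fixing all but one coordinate of a codeword of a distance-$2$ MDS code leaves a maximum independent set in the remaining factor. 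Hence $\xi M_{x_1,\dots,x_m,y_1,\dots,y_n}$ is well-defined and lies in $\mathrm{MDS}_{0,2}$, so the definition of $\kappa M$ makes sense.

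Next I would verify the cardinality. For each fixed $(x_1,\dots,x_m,y_1,\dots,y_n)\in D(m,n)$, the fiber $\xi M_{x_1,\dots,x_m,y_1,\dots,y_n}$ is an MDS code in $K_4^2$ and therefore has exactly $4$ elements; since there are $4^{m+n}$ choices of $(x_1,\dots,x_m,y_1,\dots,y_n)$ and the codewords of $\kappa M$ for distinct such tuples are distinct, we get $|\kappa M| = 4\cdot 4^{m+n} = 4^{m+n+1}$, which matches the independence number $4^{2m+(n+2)-1}$ of $D(m,n+2)$.

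For independence, suppose $u=(x,z_1,z_2,y)$ and $u'=(x',z_1',z_2',y')$ are two distinct codewords of $\kappa M$ at distance $1$ in $D(m,n+2)$; here $x=(x_1,\dots,x_m)$, $y=(y_1,\dots,y_n)$, and distance in the $z$-coordinates is the $K_4$ distance. If $(x,y)=(x',y')$, then $(z_1,z_2)$ and $(z_1',z_2')$ are distinct and at distance $1$ in the same fiber $\xi M_{x,y}\in\mathrm{MDS}_{0,2}$, contradicting that this fiber is an independent set. Otherwise $(x,y)$ and $(x',y')$ differ in exactly one coordinate and $(z_1,z_2)=(z_1',z_2')$; I would split into the two cases according to whether the differing coordinate is an $\mathrm{Sh}$-coordinate or a $K_4$-coordinate. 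The key structural fact I need is a compatibility property of $\xi$: when $M_{x,y}$ and $M_{x',y'}$ are two elements of $\mathrm{MDS}_{1,0}$ that "come from" adjacent slices of the code $M$ — and these two slices must then share no common vertex of $\mathrm{Sh}$, i.e.\ $M_{x,y}\cap M_{x',y'}=\varnothing$ — the corresponding images $\xi M_{x,y}$ and $\xi M_{x',y'}$ in $\mathrm{MDS}_{0,2}$ must also be disjoint as subsets of $K_4^2$. Granting this, the common element $(z_1,z_2)$ would lie in both $\xi M_{x,y}$ and $\xi M_{x',y'}$, a contradiction. This disjointness property of $\xi$ is exactly what one would check by inspecting Figure~\ref{fig:krotov1}: the $16$ listed MDS codes of $\mathrm{Sh}$ fall into classes, and $\xi$ is designed so that vertex-disjoint pairs map to vertex-disjoint pairs (and more generally the "intersection pattern" is preserved well enough for this argument).

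The main obstacle is precisely establishing this combinatorial property of the hand-crafted map $\xi$ — that it respects disjointness (and the relevant adjacency structure) of the $16$ Shrikhande MDS codes. I would handle it by a finite case analysis: classify the pairs $M_{x,y}, M_{x',y'}$ of Shrikhande MDS codes that can arise from adjacent slices of a genuine MDS code $M$ in $D(m+1,n)$ (adjacency in an $\mathrm{Sh}$-coordinate forces the two slices to be disjoint; adjacency in a $K_4$-coordinate forces the slices $M_{x,y}$ and $M_{x',y'}$ to be disjoint as well, since $M$ is a distance-$2$ code), and then verify from Figure~\ref{fig:krotov1} that in every such case $\xi M_{x,y}$ and $\xi M_{x',y'}$ are disjoint. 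Everything else — well-definedness, cardinality, and the within-fiber case — is immediate from the MDS property of the fibers.
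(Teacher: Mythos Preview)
Your approach is essentially the same as the paper's: reduce everything to the single combinatorial property of $\xi$ that disjoint Shrikhande MDS codes map to disjoint $K_4^2$ MDS codes (the paper phrases it as the stronger ``intersect $\Leftrightarrow$ intersect'', which is later reused for injectivity of $\kappa$), then use that adjacent base points $(x,y)$ and $(x',y')$ force disjoint fibers $M_{x,y}$, $M_{x',y'}$, hence disjoint images, hence independence of $\kappa M$.

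One slip to fix: in your cardinality count you write that there are $4^{m+n}$ choices of $(x_1,\dots,x_m,y_1,\dots,y_n)$ and conclude $|\kappa M|=4^{m+n+1}$. But each $x_i$ ranges over the $16$ vertices of $\mathrm{Sh}$, so there are $16^m\cdot 4^n=4^{2m+n}$ base points and $|\kappa M|=4\cdot 4^{2m+n}=4^{2m+n+1}$, which is the correct independence number of $D(m,n+2)$.
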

\begin{proof}
 The map $\xi$ has the following important property,
which can be checked directly, see Figure~\ref{fig:krotov1}:
two MDS codes $M'$ and $M''$ in $D(1,0)$ intersect if and only if their images 
$\xi M'$ and $\xi M''$ intersect.
Since $M$ is an independent set,
 $M_{x_1,...,x_m,y_1,...,y_n}$ and $M_{u_1,...,u_m,w_1,...,w_n}$ 
(and hence, also $\xi M_{x_1,...,x_m,y_1,...,y_n}$ and $\xi M_{u_1,...,u_m,w_1,...,w_n}$)
are disjoint for any two neighbor vertices 
$(x_1,...,x_m,y_1,...,y_n)$ and $(u_1,...,u_m,w_1,\linebreak[2]...,\linebreak[2]w_n)$ of $D(m,n)$. 
It follows that $\kappa M$ 
is also an independent set.
Moreover, it has the same cardinality as $M$,
 i.e., $4^{2m+n+1}$.
\end{proof}

Then, $\kappa^m$, the $m$th iteration of $\kappa$, maps $\mathrm{MDS}_{m,n}$
 into $\mathrm{MDS}_{0,2m+n}$.

\begin{corollary}\label{c:ub}
 The number of the MDS codes in $D(m,n)$ 
 does not exceed
 $$2^{2^{2m+n-1}(1+o(1))}.$$
\end{corollary}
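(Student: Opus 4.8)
The plan is to combine Lemma~\ref{l:inj} (after iterating it $m$ times to get an injection $\mathrm{MDS}_{m,n}\hookrightarrow\mathrm{MDS}_{0,2m+n}$) with a known upper bound on the number of MDS codes — equivalently, latin hypercubes — in the Hamming graph $H(N,4)$ with $N=2m+n$. First I would recall that an MDS code in $H(N,4)$ of size $4^{N-1}$ can be viewed as a latin hypercube of dimension $N-1$ over an alphabet of size $4$, i.e.\ a function from $\{0,1,2,3\}^{N-1}$ to $\{0,1,2,3\}$ that is a bijection in each coordinate when the others are fixed. The number of such objects is at most the number of ways to extend latin hypercubes line by line, and the standard permanent/entropy bound gives $\log_2(\#\mathrm{MDS}_{0,N}) \le 4^{N-1}\log_2 4\cdot(1+o(1)) = 2^{2N-1}(1+o(1))$ — in fact even the trivial bound $(\#\text{symbols})^{(\#\text{cells})} = 4^{4^{N-1}}$ already yields exponent $2\cdot 4^{N-1}=2^{2N}$, which is within the claimed $(1+o(1))$ factor of $2^{2N-1}$ only if we are a bit more careful. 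So I would invoke the sharper count: the number of latin hypercubes of order $4$ and dimension $N-1$ is $2^{2^{2N-1}(1+o(1))}$ as $N\to\infty$, which follows from the van der Waerden/Brègman-type bounds on permanents applied row by row (this is the classical asymptotic count of latin squares extended to hypercubes; see e.g.\ the references on latin hypercubes).

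The key steps, in order, are: (1) apply $\kappa^m$ to conclude $\#\mathrm{MDS}_{m,n}\le \#\mathrm{MDS}_{0,2m+n}$, using injectivity of $\kappa$, which in turn rests on the stated property of $\xi$ (that $\xi M'\cap\xi M''\ne\emptyset \iff M'\cap M''\ne\emptyset$) together with Lemma~\ref{l:inj}; (2) identify $\mathrm{MDS}_{0,N}$ with latin hypercubes of dimension $N-1$ over a $4$-letter alphabet; (3) quote or derive the asymptotic upper bound $\#\mathrm{MDS}_{0,N}\le 2^{2^{2N-1}(1+o(1))}$; (4) substitute $N=2m+n$ and note $(2m+n)\to\infty$ is exactly the hypothesis regime, giving $\#\mathrm{MDS}_{m,n}\le 2^{2^{2(2m+n)-1}}$... — here I must be careful: the exponent should come out as $2^{2m+n-1}(1+o(1))$, not $2^{2(2m+n)-1}$. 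This forces the correct normalization: a latin hypercube of order $q$ in dimension $N-1$ has $q^{N-1}$ cells and at most $q$ choices per cell, so the crude bound is $q^{q^{N-1}}$, i.e.\ exponent $(\log_2 q)\,q^{N-1}$; with $q=4$ this is $2\cdot 4^{N-1}=2^{2N-1}$, matching $2^{2m+n-1}$ only if the intended reading is that the exponent is $2^{\,2m+n-1}$ meaning $2^{(2m+n)-1}$ — but $2^{2N-1}\ne 2^{N-1}$. The resolution is that the relevant count is not over the full cube but reflects that an MDS code, as a \emph{set} of $4^{N-1}$ codewords in $\{0,\dots,3\}^N$, is determined by far fewer bits: one should bound $\log_2\#\mathrm{MDS}_{0,N}$ by the entropy of a uniformly random MDS code, which the permanent bound pins at $(1+o(1))\cdot\text{(number of constraints)}$. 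I would therefore cite the precise asymptotic $\log_2 L_q(d) = (1+o(1))\,q^{d-1}\frac{d\ln q - d + 1}{\ln 2}$-type formula for latin hypercubes, but since the theorem only claims the coarse exponent $2^{2m+n-1}(1+o(1))$, all that is actually needed is the one-sided estimate $\#\mathrm{MDS}_{0,N}\le 2^{2^{N-1}(1+o(1))}$, which I would obtain by encoding an MDS code of size $4^{N-1}=2^{2N-2}$ over alphabet $\{0,1,2,3\}$ via its codewords: listing the codewords needs at most $2^{2N-2}\cdot 2N$ bits, which is $2^{2N-2+o(1)}$, still not matching.

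The honest route — and the one I expect the paper takes — is: $\log_2\#\mathrm{MDS}_{0,N}$ is asymptotically the \emph{number of free binary choices} in specifying a latin hypercube of order $4$, and the sharp answer (classical, going back to the enumeration of latin squares and its higher-dimensional analogues) is that this quantity equals $2^{2N-1}(1+o(1))/2 = 2^{2N-2}(1+o(1))$-style expressions — but the cleanest statement that gives exactly the exponent $2^{2m+n-1}$ is simply: \emph{there are at most $4^{4^{N-1}}$ latin hypercubes, and $\log_2(4^{4^{N-1}}) = 2\cdot 4^{N-1} = 2^{2N-1}$}, and with $N=2m+n$ this is $2^{2(2m+n)-1}$; since the theorem's own statement writes the exponent as $2^{2m+n-1}$, I conclude the intended identity $N-1 = 2m+n-1$ with $N=2m+n$ is being used with $q=2$ effectively... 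The \textbf{main obstacle}, then, is not any deep argument but pinning down exactly which asymptotic latin-hypercube count the exponent $2^{2m+n-1}$ refers to and citing it correctly; once that citation is in hand, the corollary is immediate: $\#\mathrm{MDS}_{m,n}\le\#\mathrm{MDS}_{0,2m+n}\le 2^{2^{2m+n-1}(1+o(1))}$, where the first inequality is the injectivity of $\kappa^m$ from Lemma~\ref{l:inj} and the second is the quoted bound on MDS codes (latin hypercubes) in the Hamming graph. I would present the proof in exactly this two-line form, with the latin-hypercube asymptotic bound stated as a cited lemma immediately before it.
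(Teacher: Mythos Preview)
Your final two-line plan --- apply $\kappa^m$ to get $|\mathrm{MDS}_{m,n}|\le|\mathrm{MDS}_{0,2m+n}|$, then invoke the known asymptotic for the Hamming case --- is exactly what the paper does: it observes that $\kappa$ is injective, reduces inductively to $m=0$, and cites \cite{PotKro:asymp} (see also \cite{KroPot:Lyap}) for that case. So the skeleton of your proposal is correct and matches the paper.

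What does \emph{not} work is everything you try in the middle in lieu of the citation. You repeatedly compute an exponent of order $4^{N-1}=2^{2N-2}$ (from the trivial cell-by-cell count, from listing codewords, from Br\`egman-type permanent bounds) and then notice it fails to match the claimed $2^{N-1}$. That mismatch is not a normalization slip: the target exponent really is $2^{N-1}$, which is smaller than $4^{N-1}$ by a full exponential factor, and no generic latin-hypercube argument will close that gap. The bound $|\mathrm{MDS}_{0,N}|\le 2^{2^{N-1}(1+o(1))}$ rests on a structural characterization peculiar to order~$4$ (roughly, every $n$-ary quasigroup of order~$4$ is built from Boolean data), and the paper itself remarks in its conclusion that $q=4$ is the \emph{only} nontrivial alphabet size for which the double-logarithm asymptotic is known. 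So drop the attempted derivations: the Hamming bound must be cited, not recovered from first principles, and once cited the corollary is the one-line reduction you describe.
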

\begin{proof}
 Since $\kappa$ obviously maps different MDS codes to different MDS codes,
 the statement of the corollary in the general case 
 can be inductively reduced to the partial case $m=0$,
 which was proven in \cite{PotKro:asymp}, see also \cite{KroPot:Lyap}.
\end{proof}

\section{A lower bound}
In this section, we consider a simple way to construct doubly exponential
(with respect to the graph diameter $2m+n$)
number of MDS codes in the Doob graph $D(m,n)$.

The vertices of ${\mathrm{Sh}}$ 
will be identified with the pairs $ab$ 
(considered as a short notation for 
$(a,b)$
),
where $a$, $b\in \{0,1,2,3\}$, 
see Figure~\ref{fig:Sh0123}.
The vertices of $K_4$ will be identified
 with the pairs $ab$,
 where $a$, $b\in \{0,1\}$.
For every function $\lambda$ 
from $\{0,1,2,3\}^m\times\{0,1\}^n$
 to $\{0,1\}$,
 we define the set

\begin{eqnarray*}
 M_\lambda 
 \stackrel{\scriptscriptstyle \mathrm{def}}=
 \bigg\{(x'_1 x''_1,\ldots,x'_{m+n}x''_{m+n})\in D(m,n) &\Big|& \sum_{i=1}^{m+n} x'_i \equiv 0\bmod 2,\\
           &&\sum_{i=1}^{m+n} x''_i \equiv \lambda(x'_1,\ldots,x'_{m+n}) \bmod 2 \bigg\} .
\end{eqnarray*}

\begin{lemma}\label{l:22n}
  For any function
 $\lambda:\{0,1,2,3\}^m\times\{0,1\}^n\to\{0,1\}$, 
the set $M_\lambda$ 
is an MDS code in $D(m,n)$.
\end{lemma}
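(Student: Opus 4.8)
The plan is to show that $M_\lambda$ is an independent set in $D(m,n)$ of size $4^{2m+n-1}$, which is the independence number, hence an MDS code. The cardinality is easy: for any choice of the ``first coordinates'' $x'_1,\ldots,x'_{m+n}$ satisfying $\sum x'_i\equiv 0\bmod 2$, the value $\lambda(x'_1,\ldots,x'_{m+n})$ is determined, and then the ``second coordinates'' $x''_1,\ldots,x''_{m+n}$ range over a coset of a codimension-$1$ subgroup, giving exactly half of the $2^{m+n}$ possible tuples $(x''_1,\ldots,x''_{m+n})$. Here I must be careful that the alphabet for $x'_i$ has size $4$ when $1\le i\le m$ (Shrikhande coordinate, $x'_i\in\{0,1,2,3\}$) and size $2$ when $m<i\le m+n$ ($K_4$ coordinate, $x'_i\in\{0,1\}$), while $x''_i\in\{0,1,2,3\}$ respectively $\{0,1\}$ in the same way. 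Counting: the number of admissible $(x'_1,\ldots,x'_{m+n})$ is $4^m\cdot 2^n/2$ (the parity constraint halves the total, and it is satisfiable since e.g.\ the all-zero tuple works), and for each the number of admissible $(x''_1,\ldots,x''_{m+n})$ is again $4^m\cdot 2^n/2$; multiplying gives $(4^m 2^n)^2/4 = 4^{2m+n}/4 = 4^{2m+n-1}$, as required.

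The core of the argument is independence: no two distinct vertices of $M_\lambda$ are adjacent. Two vertices of $D(m,n)$ are adjacent exactly when they differ in a single coordinate $i$, and, if $i\le m$, their $i$-th Shrikhande labels $x'_ix''_i$ and $y'_iy''_i$ are adjacent in $\mathrm{Sh}$, while if $i>m$ they simply differ (any change in a $K_4$ coordinate is an edge). So suppose $v=(x'_1x''_1,\ldots)$ and $w=(y'_1y''_1,\ldots)$ lie in $M_\lambda$ and differ only in coordinate $i$. I will split into the two cases. If $i>m$ (a $K_4$ coordinate), then $v,w$ differ in $x'_i$ or $x''_i$ (with $x'_i,x''_i\in\{0,1\}$). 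If they differ in $x'_i$ only, the first-coordinate parity $\sum x'_j$ flips, contradicting that both sums are $0\bmod 2$; if they differ in $x''_i$ only, the first coordinates coincide so $\lambda$ takes the same value, but the second-coordinate parity $\sum x''_j$ flips, contradiction. If they differ in both $x'_i$ and $x''_i$, then $x'_i$ flips so again $\sum x'_j$ changes parity, contradiction. Hence no edge in a $K_4$ coordinate is possible.

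The Shrikhande case $i\le m$ is the one that actually uses the structure of $\mathrm{Sh}$, and is the main point to get right. With the identification of $\mathrm{Sh}$'s vertices with $\mathbb{Z}_4^2$ as in Figure~\ref{fig:Sh0123}, the edge set of $\mathrm{Sh}$ is: $ab\sim a'b'$ iff $(a'-a,b'-b)\in\{(\pm1,0),(0,\pm1),(1,1),(-1,-1)\}$ (the Cayley graph on $\mathbb{Z}_4^2$ with connection set $\{\pm(1,0),\pm(0,1),\pm(1,1)\}$). I will check that for every one of these six difference vectors $(\delta',\delta'')$, the pair $(\delta'\bmod 2,\ \delta''\bmod 2)$ is never $(0,0)$: indeed $(\pm1,0)\mapsto(1,0)$, $(0,\pm1)\mapsto(0,1)$, $\pm(1,1)\mapsto(1,1)$. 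Therefore, if $v,w\in M_\lambda$ differ only in a Shrikhande coordinate $i$ and that difference is an edge of $\mathrm{Sh}$, then either $x'_i$ changes parity — flipping $\sum x'_j\bmod 2$, contradiction — or $x'_i$ keeps its parity but $x''_i$ changes parity, in which case $\lambda(x'_1,\ldots,x'_{m+n})=\lambda(y'_1,\ldots,y'_{m+n})$ (the first coordinates are unchanged mod nothing — they are literally equal since only coordinate $i$ changed and, wait, $x'_i$ itself may have changed by $2$). The subtlety to handle carefully: when $\delta'=\pm2$ is \emph{not} an edge direction, so I never need it; but when $\delta'=0$, the tuple $(x'_1,\ldots,x'_{m+n})$ is genuinely unchanged, so $\lambda$ is literally the same, and then $\sum x''_j$ flipping by $\delta''=\pm1$ contradicts the second congruence. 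When $\delta'=\pm1$, $\sum x'_j$ flips, contradicting the first congruence directly, regardless of $\lambda$. This exhausts all six edge types, so $M_\lambda$ is independent, completing the proof. The only place requiring genuine verification (as opposed to bookkeeping) is the description of $\mathrm{Sh}$'s adjacency in the $\mathbb{Z}_4^2$ coordinates and the mod-$2$ computation of its six connection vectors; everything else is parity arithmetic.
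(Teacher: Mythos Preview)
Your proof is correct and follows essentially the same parity argument as the paper, just organized as a direct edge-by-edge independence check rather than the paper's ``fiber'' argument (fix all coordinates but one and count the admissible values there). Both reduce to the same underlying fact about $\mathrm{Sh}$: every connection vector in $\{\pm(1,0),\pm(0,1),\pm(1,1)\}\subset\mathbb{Z}_4^2$ has at least one odd component, which you verify explicitly and the paper states in the dual form that the four admissible $x'_ix''_i$ (two parities each) are pairwise non-adjacent.
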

\begin{proof}
 It is easy to see that if $m< i\le m+n$, 
then for any values of 
 $x'_1x''_1$, \ldots, $x'_{i-1}x''_{i-1}$, $x'_{i+1}x''_{i+1}$, \ldots, $x'_{m+n}x''_{m+n}$
 there is a unique pair $x'_ix''_i$ 
such that
 $(x'_1 x''_1,\ldots,x'_{m+n}x''_{m+n})\in M_\lambda$.
 If $1\le i\le m$, then there are four such pairs
(two possibilities for $x'_i$, of the same parity,
 and for each choice of $x'_i$, two possibilities for $x''_i$), 
 but they correspond to pairwise independent
 vertices of the Shrikhande graph.
 Consequently, at first,
 $|M_\lambda|=4^{2m+n-1}$,
 and at second, 
$M_\lambda$ is an independent set.
\end{proof}
\begin{corollary}\label{c:lb}
  There are at least $2^{2^{2m+n-1}}$
 different MDS codes in $D(m,n)$.
\end{corollary}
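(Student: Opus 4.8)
The plan is to extract the lower bound directly from Lemma~\ref{l:22n}, by counting how many pairwise distinct sets occur among the codes $M_\lambda$ as $\lambda$ runs over all functions $\{0,1,2,3\}^m\times\{0,1\}^n\to\{0,1\}$. Since every $M_\lambda$ is an MDS code in $D(m,n)$, it suffices to exhibit $2^{2^{2m+n-1}}$ values of $\lambda$ that produce pairwise distinct $M_\lambda$. One cannot simply count all $2^{4^m\cdot 2^n}=2^{2^{2m+n}}$ functions, because the map $\lambda\mapsto M_\lambda$ is far from injective; the key is to see exactly which part of $\lambda$ is reflected in $M_\lambda$.

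First I would observe that $M_\lambda$ depends only on the restriction of $\lambda$ to the ``even'' subset $E=\{(x'_1,\ldots,x'_{m+n})\in\{0,1,2,3\}^m\times\{0,1\}^n : \sum_{i=1}^{m+n}x'_i\equiv 0\bmod 2\}$ of its domain: a vertex with $\sum_i x'_i$ odd never belongs to $M_\lambda$ regardless of $\lambda$, while for a vertex with $\sum_i x'_i$ even the membership condition reads $\sum_i x''_i\equiv\lambda(x'_1,\ldots,x'_{m+n})\bmod 2$ with $(x'_1,\ldots,x'_{m+n})\in E$. The crux of the argument is the converse: if $\lambda_1$ and $\lambda_2$ differ at some tuple $x'=(x'_1,\ldots,x'_{m+n})\in E$, then $M_{\lambda_1}\ne M_{\lambda_2}$. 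For this I would simply display a vertex in the symmetric difference: take $x''_i=0$ for $i<m+n$ and $x''_{m+n}=\lambda_1(x')$, which is an admissible second coordinate (the value lies in $\{0,1\}$, hence in $\{0,1,2,3\}$ as well); then $\sum_i x''_i=\lambda_1(x')$, so this vertex lies in $M_{\lambda_1}$ but not in $M_{\lambda_2}$. Hence the map $\lambda|_E\mapsto M_\lambda$ is well defined and injective.

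It remains to count $|E|$. The residue $\sum_{i=1}^{m+n}x'_i\bmod 2$ is a balanced function on $\{0,1,2,3\}^m\times\{0,1\}^n$: fixing all but one coordinate, the free coordinate ranges over $\{0,1\}$ or $\{0,1,2,3\}$, on each of which both parities occur equally often, so exactly half of the $4^m\cdot 2^n=2^{2m+n}$ tuples lie in $E$, that is, $|E|=2^{2m+n-1}$. Therefore there are $2^{|E|}=2^{2^{2m+n-1}}$ functions $E\to\{0,1\}$; extending each of them arbitrarily to a function $\lambda$ on the full domain and applying the two observations above yields $2^{2^{2m+n-1}}$ pairwise distinct MDS codes $M_\lambda$ in $D(m,n)$, as required. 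The only subtlety is the initial realization that $\lambda\mapsto M_\lambda$ collapses, together with pinning down $E$ as the genuine index set; after that everything is routine bookkeeping.
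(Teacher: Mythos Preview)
Your proof is correct and follows essentially the same approach as the paper's: the paper calls two functions ``essentially different'' when they differ at a point of your set $E$, asserts there are $2^{2^{2m+n-1}}$ essentially different functions, and that essentially different $\lambda$'s give distinct $M_\lambda$. You have simply made explicit the two steps the paper labels ``obviously'': the witness vertex showing $M_{\lambda_1}\ne M_{\lambda_2}$, and the parity count $|E|=2^{2m+n-1}$.
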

\begin{proof}
  We will say that two functions
 from $\{0,1,2,3\}^m\times\{0,1\}^n$ 
to $\{0,1\}$ 
  are essentially different
 if their values are different 
in at least one point
  $(x'_1,\ldots,x'_{m+n})$ 
satisfying 
$x'_1+\ldots+x'_{m+n} \equiv 0 \bmod 2$. 
  The number of essentially different functions is
 $2^{2^{2m+n-1}}$.
  Obviously, essentially different functions
 $\lambda$ 
lead to different MDS 
codes $M_\lambda$.
\end{proof}

\section{Conclusion}

We have established the asymptotics of $\log |\mathrm{MDS}_{m,n}|$, generalizing the similar result for the MDS codes 
in the Hamming graph $H(n,4)$ \cite{KroPot:Lyap}, \cite{PotKro:asymp}. 
Note that the case $q=4$ is the only nontrivial case when 
the asymptotics of the double logarithm of the number of MDS codes is known ($n\to\infty$, $q$ is fixed).
Known bounds for the other cases can be found in \cite{KPS:ir}, \cite{PotKro:numberQua};
the exact values for small $q$ and $n$, in \cite{MK-W:11}, \cite{MK-W:small}, \cite{PotKro:numberQua}.

A constructive characterization of the class $\mathrm{MDS}_{0,n}$
can be found in \cite{KroPot:4}.
A possibility to relate the MDS codes (maximum independent sets) in $D(m,n)$ with MDS codes in $D(2m+n,4)$ using the map $\kappa^m$ 
suggests that a similar characterization might be possible for $\mathrm{MDS}_{m,n}$ with arbitrary $m$.
However, it is not completely clear if the map $\kappa^m$ itself can be helpful for a reasonable proof of such characterization.
Since the map $\kappa$ is not point-to-point, the result of the $m$th iteration of $\kappa$ can depend on the order of coordinates.
As a result, it is not easy to track which subclass of $\mathrm{MDS}_{0,2m+n}$ we obtain as the image of $\mathrm{MDS}_{m,n}$ under $\kappa^m$
and to describe this subclass in terms of the known  characterization of $\mathrm{MDS}_{0,2m+n}$.
In any case, finding a characterization of the class $\mathrm{MDS}_{m,n}$, using $\kappa$ or not, will be a natural continuation of the current research.

\bigskip

\begin{thebibliography}{1}

\bibitem{Brouwer}
A.~E. Brouwer, A.~M. Cohen, A.~Neumaier.
  {\it Distance-Regular Graphs},
  Springer-Verlag, Berlin, 1989.
  \href{http://www.ams.org/mathscinet-getitem?mr=1002568}{MR1002568}
  
\bibitem{KroPot:Lyap}
D.~S. Krotov, V.~N. Potapov.
  {\it On the reconstruction of $n$-quasigroups of order $4$ and the upper
  bounds on their number}, 
  in {Proc. the {C}onference Devoted to the 90th Anniversary of
  {A}lexei A. {L}yapunov}, Novosibirsk, Russia, Oct. 2001. 323--327.
  \url{http://www.sbras.ru/ws/Lyap2001/2363}

\bibitem{KroPot:4}
D.~S. Krotov, V.~N. Potapov.
  {\it $n$-{A}ry quasigroups of order $4$},
  \href{http://epubs.siam.org/journal/sjdmec}{SIAM J. Discrete Math.},  
  \textbf{23}:2 (2009), 
  561--570.
  DOI:\href{http://dx.doi.org/10.1137/070697331}{10.1137/070697331}.
  \href{http://www.ams.org/mathscinet-getitem?mr=2496903}{MR2496903}  
  \href{http://arxiv.org/abs/math/0701519}{arXiv:math/0701519}

\bibitem{KPS:ir}
D.~S. Krotov, V.~N. Potapov, P.~V. Sokolova.
 On reconstructing reducible $n$-ary quasigroups and switching subquasigroups.
 \href{http://www.math.md/en/publications/qrs/}{Quasigroups Relat. Syst.},
  \textbf{16}:1 (2008), 55--67.
 DOI:\href{http://dx.doi.org/10.17686/sced_rusnauka_2008-1040}{10.17686/sced\_rusnauka\_2008-1040}.
  \href{http://www.ams.org/mathscinet-getitem?mr=2435527}{MR2435527}
  \href{http://arxiv.org/abs/math/0608269}{arXiv:math/0608269}

\bibitem{MWS}
F.~J. MacWilliams, N.~J.~A. Sloane.
  {\it The Theory of Error-Correcting Codes},
  North Holland, Amsterdam, 1977.
  \href{http://www.ams.org/mathscinet-getitem?mr=0465509}{MR0465509}
  \href{http://www.ams.org/mathscinet-getitem?mr=0465510}{MR0465510}

  \bibitem{MK-W:11}
B.~D. McKay, I.~M. Wanless.
 {\it On the number of {L}atin squares},
 \href{http://www.springerlink.com/content/0218-0006}{Ann. Comb.}, 
 \textbf{9}:3 (2005), 335--344.
 DOI:\href{http://dx.doi.org/10.1007/s00026-005-0261-7}{10.1007/s00026-005-0261-7}.
  \href{http://www.ams.org/mathscinet-getitem?mr=2176596}{MR2176596}
\href{http://arxiv.org/abs/0909.2101}{arXiv:0909.2101}

 \bibitem{MK-W:small}
 B.~D. McKay, I.~M. Wanless.
 {\it A census of small {L}atin hypercubes},
 \href{http://epubs.siam.org/journal/sjdmec}{SIAM J. Discrete Math.}, 
 \textbf{22}:2 (2008), 719--736.
 DOI:\href{http://dx.doi.org/10.1137/070693874}{10.1137/070693874}.
  \href{http://www.ams.org/mathscinet-getitem?mr=2399374}{MR2399374}
 
\bibitem{PotKro:asymp}
V.~N. Potapov, D.~S. Krotov.
  {\it Asymptotics for the number of $n$-quasigroups of order $4$},
  {\href{http://link.springer.com/journal/11202}{Sib. Math. J.}},
   \textbf{47}:4 (2006), 720--731.
  DOI:\href{http://dx.doi.org/10.1007/s11202-006-0083-9}{10.1007/s11202-006-0083-9}, 
  translated from
  \href{http://www.mathnet.ru/php/journal.phtml?jrnid=smj\&option_lang=eng}{Sib. Mat. Zh.},  
  \textbf{47}:4 (2006), 873--887.
  \href{http://www.ams.org/mathscinet-getitem?mr=2265289}{MR2265289}
  \href{http://arxiv.org/abs/math/0605104}{arXiv:math/0605104}
  
  \bibitem{PotKro:numberQua}
V.~N. Potapov, D.~S. Krotov.
 {\it On the number of $n$-ary quasigroups of finite order},
  \href{http://www.degruyter.de/journals/dma/detail.cfm}{Discrete Math. Appl.},
  \textbf{21}:5--6 (2011), 575--585. 
  DOI:\href{http://dx.doi.org/10.1515/dma.2011.035}{10.1515/dma.2011.035},
  transtated from 
 \href{http://www.mathnet.ru/php/journal.phtml?jrnid=dm\&option_lang=eng}{Diskret. Mat.}, 
 \textbf{24}:1 (2012), 60--69.
  \href{http://www.ams.org/mathscinet-getitem?mr=2963730}{MR2963730}
  \href{http://arxiv.org/abs/0912.5453}{arXiv:0912.5453}
\end{thebibliography}

\parbox[t][0cm]{12cm}{\mbox{}\\[3cm]
  \mbox{}\hrulefill\mbox{}\hfill\mbox{}\\
  \small
  {\sc $\vphantom{0}^\star$Further reading:}\\[1mm]
   [10] \parbox[t]{105mm}{D.~S. Krotov, E.~A. Bespalov.\\
 {\it Distance-2 MDS codes and latin colorings in the Doob graphs},\\
  \href{http://arxiv.org/abs/1510.01429}{arXiv:1510.01429} }
  }
\end{document}